\newcolumntype{L}{>{\centering\arraybackslash}m{3cm}}
\newcommand{\sbar}{\bar{s}}
\newcommand{\Sone}{\mathcal{U}_1}
\newcommand{\Stwo}{\mathcal{U}_2}
\newcommand{\Stau}{\mathcal{U}_\tau}
\newcommand{\Sv}{\mathcal{U}_v}
\newcommand{\Svprime}{\mathcal{U}_{v^\prime}}
\newtheorem{remk}{Remark}[section]
\newtheorem{prop}{Proposition}[section]
\newtheorem{thm}{Theorem}[section]
\begin{document}
\newpage
\markboth{Staff Scheduling at Airports}{A Resource Management Problem}

\title{\huge \textbf{\Large An Efficient Algorithm to the Integrated Shift and Task Scheduling Problem}\\
{\normalsize \textbf{G S R Murthy}\footnote{\emph{Corresponding author}:G S R Murthy (murthygsr@gmail.com)}~~~~~~~~ \ \textbf{T R Lalita}\footnote{trlalita@gmail.com} }\\
{\scriptsize \emph{SQC \& OR Unit, Indian Statistical Institute, St.No.8, Habsiguda,
Hyderabad 500007, India}}}


\maketitle

\begin{abstract}
%
This paper deals with operational models for integrated shift and task
scheduling problem. Staff scheduling problem is a special case of this with
staff requirements as given input to the problem. Both problems become hard
to solve when the problems are considered with flexible shifts. Current
literature on these problems leaves good scope for potential research. In
this article, we propose a new method to solve the integrated problem and its
special case, the staff scheduling problem. We consider these problems with
wide flexibility - a feature that is addressed in a limited way in the
existing literature. We introduce a new technique to solve the problem with
large demand efficiently. When the objective function is the number of
workers, we provide a tight lower bound that is easily computable. Through a
number of numerical experiments with live and simulated problem instances, we
demonstrate huge savings in the solution times over the existing ones.

\end{abstract}

\textbf{Keywords}: Project Scheduling, Staff Rostering, Shift Scheduling, Task Scheduling, Mathematical Modelling, Continuous Tour Scheduling \thispagestyle{empty}


\newpage

\section{Introduction}\label{sec:introduction}
Personnel scheduling problems arise in a variety of applications and deal with
assignment of shifts to workforce over a planning horizon. A large number of
applications involve flexible work schedules. Workforce requirements over the planning
horizon are induced by task characteristics such as duration, number of workers
required to perform the task, deadlines, etc. Demand, the number of workers
required, in each time period of the planning horizon, may be known exactly or
assumed according to a predictable pattern, is necessary for the purpose of
planning. The flexibility in staff (or work) schedules
has two components: (i) type of shift which
specifies duration, breaks and their positioning within shift, etc., (ii) time gap between successive
shifts, bounds on the number of shifts in the planning horizon, bounds on the total
number of worker-hours, days-off, etc. The constraints in the latter component, part of the
tour scheduling process,
are imposed due to labour laws, company regulations, employees preferences and so on.
The complexities and challenges are aggravated
by the flexibilities of staff and task schedules. One of the factors that is
ignored in much of the existing literature is not including breaks within shifts
(see \cite{thompson2007scheduling}). Breaks can be included using implicit formulations
(see \cite{sungur2017shift},\cite{aykin1996optimal}), but this would dramatically increase the
size of the problem.

In this article, we are concerned with two versions of personnel scheduling problem over
a discrete planning horizon. In the first version, staff schedules are flexible but the
tasks are fixed and the demand of resources (number of personnel required) for each time
period of the planning horizon is specified. The second version is an extension of the
first and it allows tasks to be scheduled within specified time periods and the tasks may
have precedence relationships. The workforce demand is a result of task scheduling. The
objective in both versions is to minimize the number of workers or an associated cost.
The second version is referred to as \emph{integrated shift and task scheduling problem}
(ISTSP). The problem is so complex that it calls for special formulations and methods for
solving it. \cite{stolletz2010operational} computes the possible tours in a further
restricted case of first version of the problem (shifts without breaks, shifts restricted
to 4~am to 9~pm) to the tune of $10^{19}$. ISTSP is intractable for exact solution
approaches. A common mathematical programming approach to solving ISTSP uses set covering
formulation or its variants (\cite{dantzig1954letter}). Solution approaches presented in
\cite{maenhout2016exact} and \cite{volland2017column} are some of the recent
contributions in this direction.

To the best of our knowledge, the methods for staff scheduling or ISTSP in the existing
literature have not considered a wide range of problems.  For example,
\cite{stolletz2010operational} and \cite{brunner2014stabilized} have considered
discontinuous tour scheduling problems and not the problems with continuous demand. While
the former considers shifts without breaks, the latter considers shifts with only one
break. Similarly, \cite{volland2017column} does not consider breaks within shifts.
Moreover, these articles implicitly express that problems with larger demands (by
classifying them under small, medium and large) are harder to solve. Against this
backdrop, we believe that this article makes an important and significant contribution.
The main contribution of this article is that we provide a new method for ISTSP that can
\begin{itemize}
  \item reduce solution times drastically,
  \item solve problems with large demands in approximately the same time taken for problems with
        small demands, and
  \item handle wide flexibility in shifts resulting from multiple breaks.
\end{itemize}
The organisation of the rest of this article is as follows. In the next section, we start
with the genesis of this work and present a brief discussion on the extensions of the
model assumptions and their consequences. This will be followed
by a brief literature review with focus on recent contributions relevant to this
paper.  In Section~\ref{sec:problemdescription}, we present the
problem description, our formulations, solution approach and a discussion on their applications.
Section~\ref{sec:liveinstances} describes our numerical experiments with data
from live problems and simulation. The simulation exercises are carefully planned so
as to compare our approach with existing methods. Section~\ref{sec:summarysection}
presents the summary of the experimental results. The article is concluded in
Section~\ref{sec:conclusion}, with a summary and possible scope for future research.

\section{Motivation and Literature Review} \label{sec:motivation}
This work is an extension of a problem that we received from a software company.
For ease of cross referencing, we shall call this the Software Industry Problem (SIP) in this article.
The requirement was to develop a method for determining staff schedules with
flexible shifts to meet workforce demand specified for every 30-minute time period~(TP) over
one week planning horizon (336 TPs) with an objective of minimizing the number
of workers. Demand for a selected week is shown in Fig.~\ref{fig:Callcenterdemand}.
The admissible shifts in this problem should
satisfy four conditions: (i)~shift has two tea breaks each of 15 minutes duration
and one lunch break of 60 minutes, (ii)~no break in the first 90 minutes,
(iii)~at least 90 minutes gap between any two successive breaks, and (iv)~the
duration of the shift including breaks is 9 hours.
\begin{figure}[h]
  \centering
  \includegraphics[width=11cm]{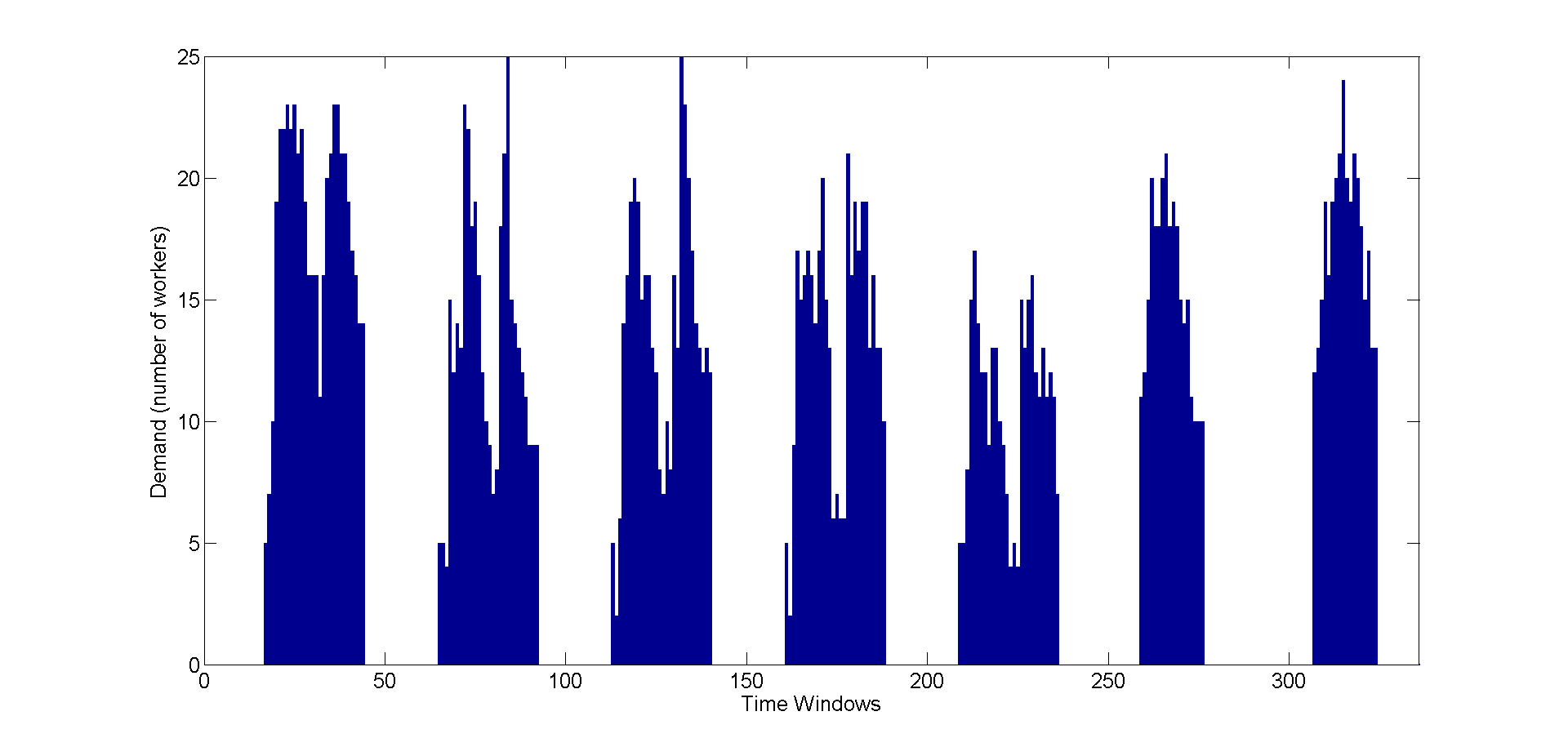}
  \caption{SIP demand for every 30-minute over one week (336 TPs). No demand
            during 10~pm to 8~am. Total demand 1258 worker-hours.}\label{fig:Callcenterdemand}
\end{figure}

This work is the outcome of our effort to solve the SIP in its full
flexibility. Encouraged by the results and the nature of our approach, we noticed
that it can be extended to ISTSP.

There is vast literature on personnel scheduling problem. The problem has been classified
into different categories depending upon the areas of applications, models and solution
approaches. For a detailed review on personnel scheduling problems,
see \cite{DBLP:journals/eor/ErnstJKS04} and \cite{van2013personnel},  and references therein.
Different approaches are pursued for solving staff scheduling problems
(see \cite{alfares2004survey}, \cite{bellenguez2007branch} and \cite{brunner2010midterm}).
The main hurdle in solving staff scheduling problems is their size. In SIP, there are  260
different shifts satisfying the stated conditions. Since a
shift can commence at the beginning of any of the TPs, there are $12480 (=48\times
260)$ possible shift schedules within a day.  High scheduling flexibility results in huge
number of personnel schedules. Mathematical programming formulations
for solving staff scheduling problem are mostly based on the set-covering
formulation of \cite{dantzig1954letter}. As the set covering formulation requires the
all personnel schedules, decomposition and column generation techniques through implicit formulations are
commonly used to handle the situation. Implicit formulations are developed for
several applications (see \cite{thompson2007scheduling}, \cite{thompson1995improved},
\cite{jarrah1994solving}, \cite{jacobs1996overlapping},\cite{aykin1996optimal},
\cite{jacobs1996overlapping} and \cite{brunner2009flexible}, and
\cite{sungur2017shift}). Yet, the problem remains complex as the implicit
formulations often result in large number of constraints (see
\cite{bellenguez2007branch} and \cite{brunner2009flexible}). Decomposition
technique is used to breakdown the problem into stages so as to reduce the size of the problem (see \cite{jarrah1994solving},
\cite{alfares2004survey}, \cite{stolletz2010operational} and \cite{brunner2014stabilized}).
Also see \cite{brucker2011personnel} for a discussion on models and complexities in
personnel scheduling problems.

The work in this article is closely related to three articles:
(i)~\cite{stolletz2010operational}, (ii)~\cite{brunner2014stabilized} and
(iii)~\cite{volland2017column}. The first two of these  deal with staff scheduling with
given resource input, and the the third one deals with ISTSP. First, we shall brief the
contributions of these articles and other works related to them.

\cite{stolletz2010operational} introduced a \emph{reduced set covering
formulation} to solve a personnel touring problem  for check-in systems at
airports. His model considers a fortnightly planning horizon comprising
30-minute TPs. The staff requirements are needed only in the TPs confined to
time between 4~am to 9~pm. The restrictions on the shifts are that they must
start and end between 4~am and 9~pm,  no breaks are allowed and their
durations must be between 6~TPs to 20~TPs (i.e., between 3 hours to 10
hours). With these restrictions, there are 330 staff schedules; and using
these, the problem was solved through a binary integer programming
formulation. \cite{brunner2014stabilized} expanded the scope of the problem
by incorporating one period lunch break in the shifts. They report poor
convergence of the column generation subroutine and introduce stabilized column generation procedure.
SIP is similar to the one considered by \cite{brunner2014stabilized} but with
higher complexity as it involves multiple and more flexible breaks in the
shifts ($12480$ personnel schedules per day). Though SIP is also
discontinuous (i.e., workforce is required only between 8~am to 10~pm), we
considered the more general problem of continuous case, that is, staff
requirements may be there in all TPs. Therefore,  our model is more general
and more complex, in terms of the size of the problem, compared to that of
\cite{brunner2014stabilized}. \cite{stolletz2014rolling} develop a rolling
planning horizon-based heuristic for the tour scheduling problem for agents
with multiple skills and flexible contracts in check-in counters at airports.

When supply vector is fixed, ISTSP reduces to the well known resource constrained project
scheduling problem (RCPSP) with personnel as resources. See \cite{hartmann2010survey} for
a survey on RCPSP and its extensions. The published literature on ISTSP is limited. For
applications of the problem see \cite{belien2008branch}, \cite{maenhout2013integrated},
\cite{di2014assessment}, \cite{kim2015two}, \cite{volland2017column} in health sector;
\cite{belien2013integrated} for scheduling problem in an aircraft maintenance company;
and \cite{bassett2000assigning} for a scheduling problem in an agro-based industry. On
the solution methods for the problem, see \cite{alfares1997integrated},
\cite{bailey1995optimization},  \cite{bassett2000assigning}, \cite{belien2008branch} and
\cite{belien2013integrated} for some early papers on the subject.
\cite{maenhout2016exact} decompose the problem into a master problem and a personnel
scheduling subproblem. The personnel schedules used in the restricted master problem are
generated iteratively through the personnel scheduling subproblems. Thus, the approach
comprises decomposition and column generation techniques. In their model, the TPs are
days, and therefore, shifts within days are not considered.

\cite{volland2017column} propose an ILP formulation (referred to as MIP in their article)
for ISTSP with a weekly planning horizon and develop a column generation method to derive
a good starting feasible solution with a lower bound for solving the MIP. The method uses
implicit formulations for two subproblems - the shift scheduling subproblem (S-SP) and
the task scheduling subproblem (T-SP). The two subproblems are linked to a restricted LP
relaxation of the MIP to generate personnel and task schedules. The process is continued
iteratively by augmenting the restricted master problem with newly generated personnel
and task schedules until the optimum objective value of the LP relaxation is attained.
Let FLPR stand for the final LP relaxation. After building the (personnel and task
schedule) columns of FLPR, they drop a set of task columns (by retaining only a selected
set of \emph{high quality} task columns) from FLPR, and add additional personnel schedule
columns to it if possible, and solve it as an ILP. Taking the optimal solution of this
ILP as a warm start, they solve the MIP.

Our model for ISTSP and the approach to solve it differ from those of
\cite{volland2017column} in three ways: (i)~breaks within shifts are more flexible
(\cite{volland2017column} does not incorporate breaks), (ii)~we do not use column
generation approach, and (iii)~we do not solve the MIP which is more complex. To get a
solution for ISTSP, we decompose it into two ILP subproblems. Solving the two subproblems
produces an optimal solution if the objective function depends only on the \emph{shift
patterns} and their positioning, and near optimal solutions if the objective is to
minimize the number of workers. The decomposition scheme in our model is based on shift
patterns. All shift patterns (allowing full admissible flexibility) can be listed using a
simple computer program instead of deriving them through a complex traditional approach
of using implicit ILP formulations. Further, we provide a lower bound for the number of
workers when there is an upper limit on the number shifts per worker.

\section{Problem description and Formulation} \label{sec:problemdescription}
In this paper, we consider ISTSP over a cyclic planning horizon of one week split
into $T$ TPs of equal duration of length $\omega$ minutes ($\omega=15$ or $30$ are
considered for the instances of this paper). In staff scheduling, a personnel
schedule assigns shifts to a worker over the planning horizon fulfilling work
schedule restrictions. Each personnel schedule will yield a binary vector in
$\mathbb{R}^{T}$ with 1s representing availability of the worker, who is assigned the
schedule, in the respective TPs. Sum of all assigned personnel schedule vectors is a
nonnegative integer vector (the supply vector), and its $j^{th}$ coordinate
specifies the number of available workers in TP $j$. On the other hand, task
scheduling involves determining start TP of each task satisfying precedence
relationships. This will yield a non-negative integer vector in $\mathbb{R}^T$ (the demand
vector) specifying the number of workers required in each TP. Under the considered
ISTSP, the problem is to determine the personnel schedules (to be assigned to
workers) and a task schedule so that the resulting supply vector is greater than or
equal to the resulting demand vector. The objective is to minimize the number
of assigned personnel schedules or sum of their given associated costs.
See Table~\ref{tab:notation} for notation and input parameters.

The planning horizon is $\mathcal{T}=[1,2,\ldots,T]$. Given $K$ tasks,
numbered 1 through $K$, task $k$ has the following inputs: (i) start window $[l_k,\
u_k]$ in which the task must start, where $l_k,\ u_k \in \mathcal{T}$ with
$l_k \leq u_k$, (ii) $d_k$, duration of the task specified as the number of TPs, and (iii)
the resource vector $\mathbf{r}_k=(r_{k1},r_{k2},\ldots,r_{kd_k})$, where $r_{kj}$
is the number of workers required in the $j^{th}$ TP of task $k$,
$j=1,2,\ldots,d_k$. For the precedence relationships among tasks, the input is a set
of task pairs $\mathcal{P}$. If $(k,\ k^\prime)\in \mathcal{P}$, it means task~$k$
should precede task~$k^\prime$. We use the notation $k\prec k^\prime$ to imply that
$(k,\ k^\prime)\in \mathcal{P}$.
\afterpage{%
{\small
\begin{table}[h]
  \centering
    \caption{Notation}\label{tab:notation}
   \begin{tabular}{p{4cm}p{8cm}}
     \hline
      \textbf{Indices} & \\
     $k$ &  task number \\
     $j$ &  time period (TP) number in the planning horizon  \\
     $i$ & shift pattern index, $i=1,2,\ldots,q$ \\
     $v$ & shift schedule index, $v=1,2,\ldots,\tau$ \\
     $u$ & worker index, $u=1,2,\ldots,w$, where $w$ is maximum
           number of workers \\
      &  \\
     \textbf{Parameters} & \\
     $T$ & number of time periods in the planning horizon \\
     $K$ & Number of tasks \\
     $q$ & number of shift patterns\\
     $l_k$ & earliest start period of task $k$ \\
     $u_k$ & latest start period of task $k$ \\
     $d_k$ & duration of task $k$ in number of TPs \\
     $\tau$ & number of shift schedules from stage~1, $=\sum_{ij}x_{ij}$\\
     $\mathbf{r}_k=(r_{k1},r_{k2},\ldots,r_{kd_k})$ & demand vector of task $k$,
            where $r_{kj}$ is the number of workers required in the $j^{th}$ TP of task $k$ \\
     $SL_{min}/SL_{max}$ & minimum/maximum limits on the length of a shift \\
     $SG_{min}$ & minimum gap (in number of TPs) to be maintained between two successive shifts \\
     $\mathbf{s}=(s_1,s_2,\ldots,s_m)$ & shift pattern of length $m$ TPs, $s_1,\ldots,s_m$ are
                  worker availabilities in the appropriate TPs \\
     $(\mathbf{s}^i,\ j)$ & shift schedule, shift pattern $\mathbf{s}^i$ starting at TP $j$ \\
       &  \\
     \textbf{Sets and vectors} &  \\
     $\mathcal{T}=[1,2,\ldots,T]$ & $\mathcal{T}$ is the planning horizon and  $T$ is the number of TPs \\
     $[l_k,\ u_k]$ & start time window of task $k$ \\
     $\mathcal{S}=\{\mathbf{s}^1,\ldots,\mathbf{s}^q\}$ & set of shift patterns\\
     $\mathcal{P}$ & set of task pairs, $(k,\ k^\prime)\in \mathcal{P}$ means
                     $k\prec k^\prime$, that is, task $k$  must be completed before starting task $k^\prime$  \\
     $\mathbf{R}=(R_1,R_2,\ldots,R_T)^t$ & the demand vector, $R_j=$ the number of workers required in TP $j$ \\
     $\mathbf{S}=(S_1,S_2,\ldots,S_T)^t$ & the supply vector, $S_j=$ the number of workers available in TP $j$ \\
     $\Sone,\ \Stwo,\ \ldots,\ \Stau$ & assigned shift schedules from stage~1 arranged in the ascending order of their
                    start TPs \\
      &  \\
     \textbf{Variables} & \\
     $y_{kj}$ & indicator variable which is one if task $k$ is assigned to TP $j$ \\
     $x_{ij}$ & number of shift schedules $(\mathbf{s}^i,\ j)$ (determined in stage~1 and assigned to $x_{ij}$ workers in stage~2) \\
     $z_{uv}$ & indicator variable, $=1$ if $\mathcal{U}_v$ is assigned to worker $u$\\
     \hline
   \end{tabular}
\end{table}
}

\clearpage
}

For the staff scheduling, the following inputs/flexibility types are considered:
(i)~shifts with or without breaks (as specified) having length between a specified
minimum ($SL_{min}$) and a maximum ($SL_{max}$), (ii)~shift start window is the range of
TPs within a day during which a shift can start, (iii)~gap between any two successive
shifts assigned to a worker in terms of number of TPs must be greater than or equal to a
specified lower limit $SG_{min}$, and (iv) an upper limit either on the number of shifts
or total hours assigned to any worker in the week. Note that (ii) above is pertinent to
certain specific instances. For example, the 330 shifts referred to in
\cite{stolletz2010operational} must start between 4~am and 6:30~pm but it depends on the
shift length as well; the start window for a 3-hour shift is 4~am to 6:30~pm, and start
window for a 3.5~hour length shift is 4~am to 6~pm, and so on. Even in the case of SIP,
no shift can start from 10~pm to 8~am (from Fig.~\ref{fig:Callcenterdemand} it can be
observed that there is no demand during this period).

The traditional approach to handle ISTSP with flexible schedules is to use implicit
formulations and iterative methods using column generation techniques. In
\cite{volland2017column}, a staff schedule is implicitly formulated for the entire
planning horizon combining shifts and their assignment. In order to mitigate the
complexity, \cite{stolletz2010operational} used a reduced set covering formulation where
predetermined daily shifts are implicitly embedded in the planning horizon. In this
paper, we reduce the complexity further. We present a two-stage approach to solve this
problem directly without using implicit formulations for shift patterns, iterative
procedures and the column generation techniques. We achieve this by using shift patterns
as the key to the entire planning. We first define shift pattern formally.

\noindent \textbf{What is a shift Pattern?}\label{shiftpatterndef} \\
A shift pattern of length $m$ is a binary $m$-vector that satisfies all the
shift constraints such as $SL_{min}\leq m \leq SL_{max}$ and the shift break period
rules. We shall denote a shift pattern by $\mathbf{s}=(s_1,s_2,\ldots,s_m)$.

\noindent \textbf{What is a shift schedule?}\\
A \emph{shift schedule}, denoted by $(\mathbf{s}, t)$,  is a combination of a
shift pattern $\mathbf{s}$ and a TP $t$. A shift schedule is used to specify
that a worker who is assigned $(\mathbf{s}, t)$ must start a fresh shift at
TP $t$ and work according to shift pattern $\mathbf{s}$. The $t$ in
$(\mathbf{s}, t)$ may be specified relative to a day (in this case, $t$
ranges from 1 to 48 with $\omega=30$) or relative to the entire planning
horizon (in this case, $t$ ranges from 1 to 336 with $\omega=30$). In our
models in this paper, $t$ is relative to the entire planning horizon.

\cite{stolletz2010operational} used shift schedules relative to day and generated 330 of
them. The shift patterns (embedded in his shift schedules) have only 1s as their
coordinates (as no break periods are considered) and their lengths vary from 6 to 20.
Similarly, in the model used by \cite{volland2017column}, there are 25 underlying shift
patterns containing only 1s as their coordinates (as no break periods are considered).
For SIP, we have 260 shifts patterns because we consider tea and lunch breaks. Each of
these patterns can be described using shift patterns of length 18 with exactly fourteen
1s, two consecutive 0s and two 0.5s. For example,
$\mathbf{s}=(1,1,1,1,0,0,1,1,1,1,0.5,1,1,0.5,1,1,1,1)$. The two 0s ($s_5$ and $s_6$)
stand for a lunch break and the two 0.5s ($s_{11}$ and $s_{14}$) stand for the two tea
breaks\footnote[1]{Firstly, we are abusing the definition of shift pattern by allowing
the fraction 0.5. This is only done to handle breaks of half TP. This will not cause any
hinderance in solving the problems using methods of this paper. Next, it might appear to
violate the condition that the gap between two successive breaks must be at least 90
minutes. Note that this can still be upheld by allowing the first tea break in the first
15 minutes of the corresponding TP and allowing the 2nd tea break in the last 15 minutes
of the corresponding TP.}. The numbers 0, 0.5 and 1 are the proportions of a TP that a
worker is available.  It must be noted that these shift patterns can be generated
implicitly through ILP formulations but that becomes very complicated. Instead, we can
use a simple computer program to generate all the shift patterns effortlessly as we did
for this problem.

We are now ready to present our two-stage solution method for ISTSP. The basic idea is
that we first determine the shift schedules in stage~1, and assign them to workers in
stage~2. The stage~1 problem is described in Section~\ref{sec:stageone} and stage~2 in
Section~\ref{sec:stagetwo}.

\subsection{Shift Pattern Subproblem - Stage~1} \label{sec:stageone}
In this stage we consider two sets of decision variables. The first set of decision
variables assigns the TPs to task starting times. The second set of variables decide
the number of shift patterns assigned to TPs so as to meet the required workforce demands.
These decisions yield the supply of workforce in each TP, and the two sets of decision
variables are linked through supply-demand constraints. The objective function of the
problem will be taken as the cost of shifts.

Let $\mathcal{S}=\{\mathbf{s}^i:\ i=1,2,\ldots,q\}$
be the set of all shift patterns and let $m_i$ be the length of
$\mathbf{s}^i,\ i=1,2,\ldots,q$. Let $y_{kj}$ be 1 if task $k$ starts in TP
$j$, and equal to 0 otherwise. Let $x_{ij}$ be the number of shift schedules
$(\mathbf{s}^i, j)$, $i=1,2,\ldots,q$ and $j\in \mathcal{T}$.

The task assignment $Y=(y_{kj})$ induces a demand vector $\mathbf{R}=(R_1,R_2,\ldots,R_T)^t$,
where $R_j$ is the number of workers required in TP $j$. The expression
for $R_j$ is given by
\begin{equation}\label{eq:RjDefn}
     R_j = \sum_{k=1}^{K} \sum_{i=1}^{d_k} r_{ki} y_{k\theta(j-i+1)},
\end{equation}
where $\theta(\cdot)$ is the \emph{wrap function} for the cyclic time horizon, that is,
$\theta(0)=T,\ \theta(-1)=T-1,\ldots, $ and $\theta(T+1)=1,\ \theta(T+2)=2,\ldots$.

Similarly, $X=(x_{ij})$ induces a supply vector $\mathbf{S}=(S_1,S_2,\ldots,S_T)^t$, where
$S_j$ is the number of workers available in TP $j$. The expression for $S_j$ is given by
\begin{equation}\label{eq:SjDefn}
      S_j = \sum_{i=1}^{q} \sum_{t=1}^{m_i} s_{t}^{i} x_{i\theta(j-t+1)},
\end{equation}
where $\mathbf{s}^i=(s_{1}^{i},\ldots,s_{m_i}^{i})$  and $\theta(\cdot)$ is the wrap function defined above.

Let $c_{ij}$ be the cost of shift schedule $(\mathbf{s}^i, j)$.
Then our \textbf{stage~1 problem} for ISTSP,
is given by
\begin{align} \label{obj:stage1}
 \texttt{ Minimize~~ }  \sum_{i=1}^{q}\sum_{j=1}^{T} c_{ij}x_{ij}~~~~~~~~~~~~~~~~~~~~~~~~~~~~~~~~~ & \\
 \texttt{ subject to~~~~~~~~~~~~~~~~~~~~~~~~~~~~~~~ } &\nonumber \\ \label{con:SgreaterthanR}
              \sum_{i=1}^{q} \sum_{t=1}^{m_i} s_{t}^{i} x_{i\theta(j-t+1)}
               \geq \sum_{k=1}^{K} \sum_{i=1}^{d_k} r_{ki} y_{k\theta(j-i+1)}, & \texttt{ for }j=1,2,\ldots, T,\\ \label{con:precedences}
                \sum_{j=1}^{T}(j+d_k-1)y_{kj} \leq \sum_{j=1}^{T}jy_{k^\prime j}  \texttt{ for all } & (k,\ k^\prime)\in \mathcal{P},  \\  \label{con:assigntask1}
            \sum_{j=1}^{T}y_{kj}  = 1 \texttt{ for }k=1,2,\ldots,K,~~~~~~~~~~~~~  & \\  \label{con:assigntask2}
             \sum_{j=1}^{l_k-1}y_{kj} + \sum_{j=u_k+1}^{T} y_{kj}  = 0,~\texttt{ for all } k~~~~~  & \\  \label{con:assigntask2b}
                            y_{kj}\in \{0,\ 1\}   \texttt{ for all }i,\ j,~~~~~~~~~~~~~~~~~~~ &  \\
              x_{ij}\texttt{s are nonnegative integers for all } i, j. & \label{MPNonnegativity}
\end{align}
Above, (\ref{con:SgreaterthanR}) is the supply-demand constraints, (\ref{con:precedences})
takes care of the precedence relationships,
(\ref{con:assigntask1}) and (\ref{con:assigntask2}) ensure that all tasks start in their
designated start windows $[l_k,\ u_k]$\footnote[2]{In the actual implementation
of the model for solving the problem, $y_{kj}$s will be defined
only for $l_k\leq j \leq u_k$.}. The objective function is the
total cost of assigned shifts.

\begin{remk}
If we change the objective function of (\ref{obj:stage1}) to
$\sum_{i=1}^{q} \sum_{t=1}^{m_i} s_{t}^{i} x_{i\theta(j-t+1)}$, the total supply, and
minimize it, then we will be minimizing the over cover (=total supply minus total demand)
because the total demand is a constant that does not depend on task scheduling.
\end{remk}

\subsection{Staff Assignment Problem - Stage~2} \label{sec:stagetwo}
From stage~1 solution, we have the shift schedules that will meet the staff demand
requirements satisfying the shift constraints. We now assign these shift schedules
to workers, maintaining staff scheduling constraints involving minimum/maximum number of
shifts/hours per worker, days-off per worker, etc. The stage~2 formulation requires
preparation of inputs. This process will be described first.

From stage~1 output, collect all shift schedules $(\mathbf{s}^i, j)$ for which
$x_{ij}>0$ and sort them according to the ascending order of $j$.
The number of such shift schedules is $\tau = \sum_{ij} x_{ij}$. Let
$\Sone=(s^{i_1}, j_1),\ \Stwo=(s^{i_2}, j_2),\ \ldots,\ \Stau=(s^{i_\tau}, j_\tau)$ be the
$\tau$ shift schedules. Note that $j_1\leq j_2\leq \ldots \leq j_\tau$, and a shift schedule
$(\mathbf{s}^i,\ j)$ with corresponding $x_{ij}$ is repeated $x_{ij}$ times in the list.

Choose a large positive integer $w$ representing maximum number of workers
available for scheduling during the planning horizon. Label the workers as
$1, 2, \ldots, w$.  Define the  decision variables of
stage~2 as follows: $z_{uv}=1$ if worker $u$ is assigned shift schedule $\Sv$,
$z_{uv}=0$ otherwise, $u=1,2,\ldots,w,\ v=1,2,\ldots,\tau$.

In order to meet the supply-demand constraints, we must assign each of the $\tau$
shift schedules to workers. Note that $f(v)=\sum_{u=1}^{w}uz_{uv}$ is worker label
to which shift schedule $v$ is assigned to. Therefore, stage~2 objective function is
$\max_v f(v)$ which we minimize to minimize the number of workers. This objective function
is linearized by introducing a dummy variable $\xi$ and a constraint as follows.
\begin{align}\label{obj:stage2}
  \texttt{Minimize~~ }   & \xi \\
  \texttt{subject to } & \nonumber \\
                       & \sum_{u=1}^{w}uz_{uv} \leq \xi,\ \  v=1,2,\ldots,\tau. \label{obj:stage2binding}
\end{align}
Next, we formulate the constraints of stage~2 problem.
\begin{description}
  \item[Assignment Constraints:] Each of $\Sone$ to $\Stau$ must be assigned to workers. This translates to
          \begin{equation}\label{con:stage2assign}
            \sum_{u=1}^{w} z_{uv} =1 \texttt{ for } v=1,2,\ldots,\tau.
          \end{equation}
  \item[Maximum number of shifts:] Suppose a worker can have at most $b$ shifts in the planning
                 horizon. This translates to
                 \begin{equation}\label{con:stage2maxnoOfshifts}
                         \sum_{v=1}^{\tau} z_{uv} \leq b, \texttt{ for } u=1, 2, \ldots, w.
                 \end{equation}
  \item[Rest period:] Rest period, the gap between any two successive shifts assigned to a worker, must
                 be at least $g$ TPs. For this, we first define a overlapping pair of shift schedules
                 $\Sv$ and $\Svprime$. For $v< v^\prime$, say that $\Sv$ and $\Svprime$ are
                 overlapping if $j_{v^\prime} \leq j_v + m_{i_v} -1 + g$. Call $(v, v^\prime)$ an overlapping
                 pair if $\Sv$ and $\Svprime$ are overlapping, $1\leq v < v^\prime \leq \tau$. To ensure rest period,
                 any worker can be assigned at most one of $\Sv$ and $\Svprime$ if $(v, v^\prime)$
                 is a overlapping pair. This translates to
                 \begin{equation}\label{con:stage2restperiod}
                         z_{uv} + z_{uv^\prime} \leq 1 \texttt{ for  every } u \texttt{ and every overlapping pair }(v, v^\prime).
                 \end{equation}
  \item[Total Hours:] The total time of any worker must not exceed $H$ TPs. Note that
                 $\sum_{v=1}^{\tau}m_{i_v}z_{uv}$ is the total duration, in number of
                 TPs, of worker $u$ over the planning
                 horizon. Therefore, the constraints are
                 \begin{equation}\label{con:stage2maxhours}
                         \sum_{v=1}^{\tau}m_{i_v}z_{uv}\leq H,\ \ \ u=1, 2, \ldots, w.
                 \end{equation}
   \item[Days-Off:] We shall assume that day-off must start from the first TP of a day.
                 In a 5-day week, a worker must get two consecutive days off. We shall
                 formulate the constraints taking $\omega=30$ and one week planning horizon (we can imitate
                 the same for other values of $\omega$). Constraints for a 6-day week
                 can be derived in a similar fashion. A two-day period can be represented
                 by the shift pattern $\bar{\mathbf{s}}$ with $\sbar_i=1$ for $i=1,2,\ldots,96$.
                 Introduce dummy shift schedules $\Sv=(\bar{\mathbf{s}}, j_v),\ v=\tau+1,\ldots,\tau+7$,
                 where $j_1,j_2,\ldots,j_7$ are the starting TPs of days 1 to 7 respectively
                 (i.e., $j_1=1,\ j_2=49,\ j_3=97$, and so on). With an abuse of convention,
                 we shall interpret the dummy shift schedules as days-off. That is,
                 $z_{u(\tau + 1)}=1$ will be interpreted as worker $u$ having first two days of
                 the week off. Interpret $z_{u(\tau + 2)}=1$ as 2nd and 3rd days of the week off,
                 and so on. With this, the two-days-off constraints for worker $u$, $u=1, 2, \ldots, w$,
                 can be written as
                    \begin{align}\label{con:stage2daysoff1}
                         \sum_{v=\tau + 1}^{\tau + 7} z_{uv} = 1, \texttt{ and for every overlapping pair} (v, v^\prime) \\
                        z_{uv} + z_{uv^\prime} \leq 1, \texttt{ where } 1 \leq v \leq \tau,\ \ \ \tau+1\leq v^\prime \leq \tau +7. \label{con:stage2daysoff2}
                 \end{align}
\end{description}
Thus, constraints for stage~2 problem can be picked from
(\ref{con:stage2assign}) to (\ref{con:stage2daysoff2}) depending upon the context, and
perhaps can be augmented with some more if necessary.

Optimality of solutions obtained by the \textbf{two-stage method}~(TSM) depends upon the nature of
objective function. The following theorems are useful in this regard.
\begin{thm}\label{thm1}
If the objective function of ISTSP is a function of shift schedules, then
the two-stage method produces an optimal solution.
\end{thm}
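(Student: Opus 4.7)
The plan is to argue that stage~1 is a relaxation of the full ISTSP when the objective depends only on the shift schedules, so its optimal value is a lower bound on the ISTSP optimum; then observe that if the TSM returns a full solution, the combined stage~1--stage~2 output is ISTSP-feasible and attains this bound, which forces optimality.

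First I would set up the projection. Any feasible ISTSP solution consists of a task-schedule $Y=(y_{kj})$, a shift-schedule count $X=(x_{ij})$, and a worker assignment $Z=(z_{uv})$ satisfying, jointly, the supply-demand, precedence, window, assignment, and worker-level constraints. Dropping $Z$ yields a pair $(X,Y)$ that automatically satisfies (\ref{con:SgreaterthanR})--(\ref{MPNonnegativity}), so it is feasible for the stage~1 program. Because the hypothesis is that the ISTSP objective depends only on the shift schedules (equivalently, only on $X$), its value at $(X,Y,Z)$ equals the stage~1 objective evaluated at $(X,Y)$. Taking the infimum over feasible ISTSP solutions gives
\begin{equation*}
  \text{opt}_{1} \;\le\; \text{opt}_{\mathrm{ISTSP}}.
\end{equation*}

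For the reverse direction, let $(X^\ast,Y^\ast)$ be the stage~1 optimum returned by the TSM and let $Z^\ast$ be a feasible assignment produced by stage~2 using the shift schedules $\Sone,\dots,\Stau$ extracted from $X^\ast$. By construction $(X^\ast,Y^\ast,Z^\ast)$ satisfies every ISTSP constraint---the stage~1 constraints from the choice of $(X^\ast,Y^\ast)$, and the worker-level constraints (\ref{con:stage2assign})--(\ref{con:stage2daysoff2}) from the choice of $Z^\ast$---and the objective, being a function of the shift schedules only, is unchanged between stage~1 and ISTSP. Hence $\text{opt}_{\mathrm{ISTSP}}\le \text{opt}_{1}$, and combining with the previous inequality gives equality, proving that the TSM output is optimal.

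The main obstacle is the tacit feasibility of stage~2 for the particular stage~1 optimum: it is conceivable that some other $(X,Y)$ with a worse stage~1 cost is the only projection of a fully feasible ISTSP solution, in which case stage~2 would be infeasible on $(X^\ast,Y^\ast)$ and the TSM would not ``produce a solution'' at all. I would handle this by interpreting the theorem as conditional on the TSM completing both stages, and point out that one can enforce enough aggregate capacity bookkeeping in stage~1 (e.g., total worker-hours, upper bounds on the per-TP counts $x_{ij}$ induced by the worker-level limits) so that at least one stage~1 optimum admits a feasible stage~2 assignment whenever ISTSP itself is feasible. Under either reading, the relaxation-plus-realization argument above delivers the claimed optimality.
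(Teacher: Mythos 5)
Your proposal is correct and follows essentially the same route as the paper's proof: the paper likewise argues that stage~1 optimizes over the same (projected) feasible region as the ISTSP and that stage~2 merely assigns the stage~1 shift schedules to workers without altering them, hence without altering an objective that depends only on shift schedules. Your explicit relaxation/realization inequalities and the caveat about stage~2 feasibility make the argument tighter than the paper's informal version (which simply asserts the feasible regions coincide and assumes an optimal solution exists), but the underlying idea is identical.
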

\begin{proof}
Suppose the objective function of the ISTSP is a function of shift schedules. Assume that
the problem has an optimal solution. Solving the Stage~I of the two-stage method, we
obtain for the original minimization problem, an optimal number of shift schedules and a
minimum cost associated with the shift schedules. As the Stage~II problem minimizes the
number of workers in the organization, it does not affect the total number of shift
schedules to be assigned to the workers, i.e., the Stage~I solution. As the feasible
region for the stage I problem is the same as the feasible region for the ISTSP, the
two-stage method produces an optimal solution.
\end{proof}

\begin{thm} \label{thm2}
If the objective of ISTSP is to minimize the number of workers with one of the
constraints as \ref{con:stage2maxnoOfshifts}, then $\lceil \frac{B}{b} \rceil$ is a
lower bound for the number of workers, where $B$ is any lower bound for stage~1
objective function, the number of shift schedules assigned.
\end{thm}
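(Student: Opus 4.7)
The plan is to chain two simple inequalities: any feasible ISTSP solution must use at least $B$ shift schedules, and under constraint~(\ref{con:stage2maxnoOfshifts}) each worker can contribute at most $b$ of them, so at least $\lceil B/b\rceil$ workers are needed.

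First I would fix an optimal ISTSP solution, say one that uses $W^{*}$ workers, and let $\tau$ denote the total number of shift schedules assigned across all workers. Constraint~(\ref{con:stage2maxnoOfshifts}) gives $\sum_{v} z_{uv} \le b$ for each worker $u$; summing over $u$ yields $\tau \le b\,W^{*}$.

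Next I would show $\tau \ge B$. From the fixed ISTSP solution I extract the task-start assignment $Y=(y_{kj})$ and the aggregate counts $x_{ij}$ recording how many workers are assigned the shift schedule $(\mathbf{s}^{i},j)$. Since the original solution is feasible for ISTSP, $Y$ already satisfies the precedence, start-window, and unique-assignment conditions~(\ref{con:precedences})--(\ref{con:assigntask2b}), and the workforce supplied by the $x_{ij}$ meets the demand induced by $Y$, which is exactly~(\ref{con:SgreaterthanR}). Hence $(X,Y)$ is feasible for the stage~1 program with cost-one objective, so $\tau=\sum_{ij}x_{ij}$ is at least the stage~1 optimum, and therefore at least the given lower bound $B$. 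Combining, $b\,W^{*}\ge B$, i.e., $W^{*}\ge B/b$; integrality of $W^{*}$ then promotes this to $W^{*}\ge \lceil B/b\rceil$.

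The only delicate point is the bookkeeping linking the worker-level assignment variables $z_{uv}$ of stage~2 to the aggregate shift-pattern counts $x_{ij}$ of stage~1, and confirming that the supply-demand inequality carries over unchanged under this aggregation. Once that is checked the rest is immediate arithmetic, and no appeal to optimality of the two-stage method (Theorem~\ref{thm1}) is required---$B$ may be any valid lower bound for stage~1, so the resulting bound $\lceil B/b\rceil$ inherits its easy computability from $B$.
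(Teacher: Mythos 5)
Your proof is correct and follows essentially the same counting argument as the paper: at least $B$ shift schedules must be assigned, each worker absorbs at most $b$ of them under constraint~(\ref{con:stage2maxnoOfshifts}), so at least $\lceil B/b \rceil$ workers are needed. Your write-up is in fact more careful than the paper's, which simply asserts a best-case packing of $b$ schedules per worker ``out of $B$ obtained in Stage I,'' whereas you explicitly justify the key step that any feasible ISTSP solution aggregates to a feasible stage~1 solution and therefore uses at least $B$ shift schedules.
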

\begin{proof}
Suppose, the objective of ISTSP is to minimize the total number of workers and the number
of shifts that can be assigned to a worker is limited by $b$ (constraint
~\ref{con:stage2maxnoOfshifts}). The two-stage method in this case may not provide an
optimal solution, however, we can compute a lower bound to the optimal solution. As a
maximum of $b$ shift schedules can be assigned to each worker, assume the best case
scenario. That is, each worker in the organization can be assigned $b$ shift schedules
out of $B$ obtained in Stage I, without overlap. This gives a lower bound, $\lceil
\frac{B}{b} \rceil$ to the number of workers in the organization.
\end{proof}

One of the factors that appears to have a significant bearing on the solution time of ISTSP
is the total demand $\sum_j R_j$. In the existing literature, the problem
instances are classified as small, medium and large based on this factor.
We introduce a \emph{split technique} to handle problems with large demands.
It has a cascading effect on reducing the solution time
of ISTSPs with large demands.

\subsection{The Split Technique} \label{sec:splittechnique}
Consider an ISTSP and suppose that $\mathbf{R}$ is a demand vector that is optimal or near
optimal. We split the demand vector $\mathbf{R}$ into sum of two new demand vectors $\mathbf{R}^1$ and $\mathbf{R}^2$ so that
$\mathbf{R}=\mathbf{R}^1+\mathbf{R}^2$. Then, we solve two new subproblems with fixed demand vectors $\mathbf{R}^1$ and $\mathbf{R}^2$
separately using the two-stage approach and combine the solution to get a solution
to the original problem. If $w^1$ and $w^2$ are optimal (or near optimal) objective
values of the two subproblems, then we have a solution for the ISTSP with $w^1+w^2$
workers. We shall explain this approach with the help of some examples.

One of the problem instances (corresponds to P4 in
Table~\ref{tab:results336}) is a problem with fixed $\mathbf{R}$ (no task scheduling)
and has a total demand 3736 worker-hours. Solving this using two-stage method,
stage~1 was solved to near optimality in 32 seconds with a lower bound of
499; but stage~2 got abandoned due to
insufficient memory. Then, we solved the two subproblems taking
$R_{j}^{1}=\lfloor  \frac{R_j}{2} \rfloor$ and $R_{j}^{2}=R_j-R_{j}^{1},\
j=1,2,\ldots,336$. The resulting subproblems have demands 1836 (for $\mathbf{R}^1$)
and 1900 (for $\mathbf{R}^2$). Solving these two problems using two-stage method
yielded the following results. The $\mathbf{R}^1$-subproblem resulted in a near optimal
solution (in 201 seconds) with 52 workers, and the  $\mathbf{R}^2$-subproblem resulted in a near optimal
solution (in 198 seconds) with 54 workers. Combining the solutions of the
two subproblems, we have a solution to the original problem
with 106 workers. From Theorem~\ref{thm2}, $100$~($=\lceil \frac{499}{5}  \rceil$)
is a lower bound for the problem. Therefore, the solution with 106 workers is at
least $94$\%~($=100-\frac{106-100}{100}\times 100$) optimal, and the problem
is solved in less than 7 minutes.

\noindent \textbf{How to solve faster?}\\
Consider a case where the total demand is so large that even after
splitting the demand vector, we still have a problem. Even for such cases, we
solve only two subproblems to get a solution. Consider the problem with a
total demand of 5136 worker-hours (see P20 in Table~\ref{tab:results336}). For
this problem, we take  $R_{j}^{1}=\lfloor  \frac{R_j}{3} \rfloor$ and
$R_{j}^{2}=R_j-\frac{2R_{j}^{1}}{3},\ j=1,2,\ldots,336$. With this, the
demands for $R^1$ and $R^2$ subproblems are 1669 and 1798 worker-hours
respectively. Note that $\mathbf{R}=2\mathbf{R}^1+\mathbf{R}^2$. Solving the two subproblems, we found a solution for
$\mathbf{R}^1$-subproblem with 57 workers, and for $\mathbf{R}^2$-subproblem with 58 workers.
To obtain a solution for the original problem, we apply the solution of
$\mathbf{R}^1$-subproblem to two sets of 57 workers each, and apply
the solution of $\mathbf{R}^2$-subproblem to another set of 58 workers. The resulting
allocation is a solution to the original problem with 172 ($=2\times 57 +58$)
workers. To find the optimality percentage, we use the lower bound of the
stage~1 problem with original demand vector. For the instance in question,
the stage~1 problem with the original demand vector with demand of 5136
worker-hours produced an optimal solution (in 2 seconds) with 852 shift schedules.
From Theorem~\ref{thm2}, the number of workers is at least
171, and hence the solution obtained using split technique is at least 98.8\%
optimal. The whole process took 6 minutes and 22 seconds.

Consider another instance with a total demand of 5615 worker-hours (P21 in
Table~\ref{tab:results336}). Splitting $\mathbf{R}=2\mathbf{R}^1+\mathbf{R}^2$ with $\mathbf{R}^1=\lfloor
\frac{\mathbf{R}}{3} \rfloor$ and solving this problem took 10 minutes 18 seconds. The number of
workers in this case is 183 and the lower bound from stage~1 solution is 181
(stage~1 took 2 seconds). Taking $\mathbf{R}=3\mathbf{R}^1+\mathbf{R}^2$ with $\mathbf{R}^1=\lfloor \frac{\mathbf{R}}{4}
\rfloor$ and solving this problem (P22) took only 4 minutes 2 seconds. The
number of workers in the resulting solution is 182.  In general, we can use
$\mathbf{R}^1=\lfloor \frac{\mathbf{R}}{\rho} \rfloor$, where the \textbf{\emph{splitting factor}}
$\rho > 1$. Choosing large $\rho$ will reduce the
solution time but will affect the optimality. Therefore, we should choose
$\rho$ judiciously.
\begin{remk} \label{rem:solveonlytwice}
Under the split technique, we solve only two subproblems with demand vectors
$\mathbf{R}^1$ and $\mathbf{R}^2$ and use the solutions to derive a solution to the original problem.
\end{remk}

\begin{remk}\label{rem:readyRvector}
The split technique is found to be very effective in solving problems with large demands.
However, this method requires the demand vector $\mathbf{R}$. For problems of ISTSP, the optimal demand
vector is to be obtained first in order to apply the split technique. It must be noted that
stage~1 of our approach produces optimal or near optimal demand vector $\mathbf{R}$ very efficiently even
for the cases where the demand is very high (see Table~\ref{tab:results336}).
Thus, our two stage approach clubbed with the split technique (if needed) can solve ISTSPs
even with large demands very efficiently.
\end{remk}
Based on our empirical experience, we make the following proposition.
\begin{prop} \label{prop1}
The total demand does not appear to be a factor that affects the complexity of ISTSP.
\end{prop}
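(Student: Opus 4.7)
Since Proposition~\ref{prop1} is an empirical observation about solution-time behaviour rather than a statement about mathematical objects, the plan is to support it by exhibiting a uniform solution procedure whose running time is (approximately) demand-independent, and then to back this up with the numerical evidence that the paper is about to present. In particular, I would argue that the two-stage method combined with the split technique produces, for arbitrary demand magnitudes, solutions in time essentially comparable to that of the low-demand instances already handled.

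First I would treat the two stages separately. Stage~1 is the ILP (\ref{obj:stage1})--(\ref{MPNonnegativity}) whose combinatorial skeleton depends on $K$, $q$, $T$, and the start windows $[l_k,u_k]$, but not on the magnitudes $r_{kj}$: scaling up $\mathbf{r}_k$ merely inflates the right-hand sides of (\ref{con:SgreaterthanR}) and the optimal values of $x_{ij}$, which branch-and-bound handles via the LP relaxation without additional combinatorial branching. Remark~\ref{rem:readyRvector} and the Stage~1 solution times reported in Table~\ref{tab:results336} are the evidence I would cite to substantiate this. Thus Stage~1 already supplies a near-optimal demand vector $\mathbf{R}$ regardless of its total mass.

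The harder part is Stage~2, whose size grows with $\tau=\sum_{ij}x_{ij}$ and hence with demand. Here I would invoke Section~\ref{sec:splittechnique}: given any $\mathbf{R}$, choose a splitting factor $\rho>1$ and set $\mathbf{R}^1=\lfloor\mathbf{R}/\rho\rfloor$, $\mathbf{R}^2=\mathbf{R}-(\rho-1)\mathbf{R}^1$, so that both subproblems lie in the tractable Stage~2 regime. By Remark~\ref{rem:solveonlytwice} only two Stage~2 ILPs need to be solved, and their solutions can be replicated $\rho-1$ times and once respectively to cover $\mathbf{R}$. Picking $\rho$ large enough always keeps both subproblems inside the demand range where Stage~2 is fast, so the overall wall-clock time becomes essentially independent of $\sum_j R_j$. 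Optimality loss is controlled by Theorem~\ref{thm2} applied to the Stage~1 lower bound $B$ of the original problem, which bounds how far the spliced solution can be from optimal.

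The main obstacle is that the proposition is not a theorem in the classical sense: it does not identify a formal complexity class and it is explicitly hedged by ``does not appear to.'' Consequently, the argument above can only be made persuasive, not airtight. To complete the justification I would therefore couple the structural reasoning with the computational evidence in Sections~\ref{sec:liveinstances} and~\ref{sec:summarysection}, demonstrating that across the full range of demands in the test set the combined Stage~1 plus split-Stage~2 times remain within a narrow band. Any strengthening to a rigorous complexity statement would require a formal cost model for the ILP solver used, which lies outside the scope of this paper.
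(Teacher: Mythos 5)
Your proposal matches the paper's treatment: the paper offers no formal proof of Proposition~\ref{prop1}, explicitly prefacing it with ``Based on our empirical experience,'' and its implicit justification is exactly the combination you assemble --- Stage~1 insensitivity to demand magnitude, the split technique (Remarks~\ref{rem:solveonlytwice} and~\ref{rem:readyRvector}) to tame Stage~2, and the timing evidence in Tables~\ref{tab:results336} and~\ref{tab:results672}. You also correctly flag that the hedged wording precludes anything more than a persuasive empirical argument, which is all the paper claims.
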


\section{Live Instances and Numerical Experiments} \label{sec:liveinstances}
In this section we assess the performance of the two-stage approach clubbed with split
technique (where necessary) with a number of live and simulated instances. For this, we
consider two categories of problems. The first one corresponds to the type of problems
considered in \cite{stolletz2010operational} and \cite{brunner2014stabilized} where the
tasks are already scheduled and we have a demand vector $\mathbf{R}$ as input to the
problem. The second category of problems corresponds to the type of problems dealt with
in \cite{volland2017column} where both tasks and shifts have to be scheduled, that is,
proper ISTSPs. The live instances for the first category are taken from requirements from
software industry, airport check-in counter staff requirements and call center data. For
the second category of problems, we use simulated data. For the purpose of comparison,
the data are simulated using the distributions specified in \cite{volland2017column} as
well as some new distributions.   We also have one live data from emergency medical
services (108 service in India) for this category. For the first category of problems, we
consider $\omega=30$ and $T=336$, and for the second category, $\omega=15$ and $T=672$.
All problems are treated with cyclic planning horizon.

\noindent \textbf{Types of shift patterns}\\
For our numerical experiments, we considered four types of shift patterns
described below. The first three of them are used in problems with
$\omega=30$ and $T=336$. The fourth one, FX29, is
used in problems with $\omega=15$ and $T=672$.
\begin{description}
  \item[FX260] Under this, all shifts have fixed duration of 9 hours (18 TPs of length
        $\omega=30$) with breaks satisfying conditions (i) to (iv) stated at the beginning of
        Section~\ref{sec:motivation}. The number 260 is the number of shift patterns under FX260.
  \item[FL15] There are 15 patterns under this with durations varying from 3 hours to 10 hours.
            Relief breaks are incorporated at appropriate positions depending on duration of the shift
            (3-5h: no break, 5.5-6h: one 15-minute break, 6.5-8h: one 30-minute break,
            8.5-10h: two 15-minute and one 30-minute breaks).
  \item[FL135] \cite{brunner2014stabilized} considered lunch breaks in the shift patterns. The duration
            of the shifts varies from 3 to 10 hours with exactly one 30-minute break with the condition that
            no break in the first one hour and in the last one hour of the shift. There are
            135 such shift patterns.
  \item[FX29] These are shift patterns with durations varying from 3 hours to 10 hours
      without breaks. These are the 29 patterns considered in \cite{volland2017column}
      for their numerical experiments.
\end{description}

In all, we solved 40 instances (see tables \ref{tab:results336} and
\ref{tab:results672}). All problems are solved using the LINGO professional
solver Version 13.0 on an i7 64-bit processor with 2.80GHz clock
speed and 16 GB RAM running on a Windows 10 platform. Unless specified
otherwise, the objective for all the problems is taken as minimizing the
number of workers. The instances are described in the following
subsections. Their results are discussed in Section~\ref{sec:summarysection}.

\subsection{The Software Industry Problem} \label{sec:sipinstance}
The background of this problem was described at the beginning of
Section~\ref{sec:motivation}. For this problem, $\omega=30,\ T=336,\ q=260,\
m_i=m=18$ for $i=1,2,\ldots,q$. This problem is similar to the discontinuous
tour scheduling problem of \cite{stolletz2010operational}. There is no demand
during the periods 10pm to 8am on all days. For an instance of this problem,
the total demand is 1258 worker-hours (see P1 in Table~\ref{tab:results336}), and the
number of workers required over TPs varies from 0 to 25 (see
Fig.~\ref{fig:Callcenterdemand}) with an average of 6.4 workers per TP. The
problem is solved using FX260 shift patterns without imposing any restriction
on the start times of shift patterns. Additional restrictions imposed on the
problem are: (i) at most 5 shifts per worker and (ii) at least 12 hours gap
between any two successive shifts of any worker. For simplicity, we have not
imposed the day-offs to be consecutive. The stage~1 problem was solved in 11
seconds with an optimum objective value of 220 shift schedules. Stage~2
problem was solved in 77 seconds with the optimum objective value of 44
workers. Since this objective value is equal to the lower bound
($44=\lceil \frac{220}{5} \rceil$), the solution is optimal for the problem.

\subsection{Airport Check-in Counter Requirement Problem} \label{sec:airportrequirements}
In this problem, we consider agent requirements to man the check-in counters.
Airline departures over a season (spanning about 6 months) are planned in
advance based on weekly roster. The number of counters allocated to each
airline varies over time depending on the departures of that airline. In
\cite{lalitamm2020}, these requirements were worked out for various airlines'
schedules from a major international airport in India. The weekly departures
of an airline gives rise to agent requirements over the week. Taking domestic
and international departures separately of three airlines, coded as JAW, AAW
and BAW, we formed five demand vectors for  JAW-I, JAW-D, BAW-I, BAW-D and
AAW-D. From these, we derived 10 instances by combining them with the three
shift pattern types FX260, FL15 and FL135, objective function type and the
type of constraint on the worker load. These 10 instances correspond to P2 to
P15 in Table~\ref{tab:results336}. For example, P2 instance is formed by
taking the departures of BAW-I, FX260 shift patterns, worker-load constraint
as the maximum number of shifts per worker in the week and the objective
function as the number of workers. For details of other instances, see the
note under Table~\ref{tab:results336}. P7 to P10 correspond to the instance but
solved under different constraints and different objective functions (see
Fig~\ref{fig:costsandtypes}).  The demands vary from 608 to 3752 agent-hours
(see Table~\ref{tab:results336}). Unlike the discontinuous tour scheduling
problem considered in \cite{stolletz2010operational}, P2 to P15 have
continuous requirements over the planning horizon.

We shall discuss the results of our approach of solving two instances - P4 and P5
with demands 3736 and 1467 agent-hours respectively.
Assuming that the agents can start their shifts at the beginning of every half hour, the
requirements were computed based on 30-minute TPs for weekly planned
departures. Fig~\ref{fig:w9} presents the demand patterns for the two problems.
\begin{figure}[h]
  \centering
  \includegraphics[width=11cm,height=6cm]{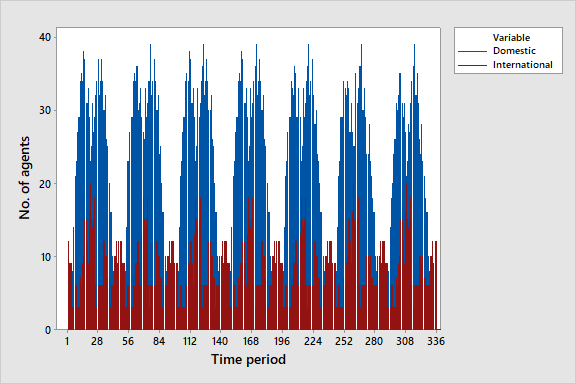}
  \caption{Agent requirements for JAW domestic and international departures}\label{fig:w9}
\end{figure}

Note that \cite{stolletz2010operational} starts with 330 shift schedules (per
day) and uses them in his model to obtain staff schedules over the entire
planning horizon. For problems with FX260, $\omega=30$ and $T=336$, there are
$12480(=260\times 48)$ shift schedules per day. \cite{brunner2014stabilized}
observed that with one flexible lunch break, the number of shift schedules
(per day) rises to 2690 and point out that they were not able to solve the
problem with their model using standard MIP software. The major difference
between the two problems (P4 and P5) and the discontinuous tour scheduling problem considered
in \cite{stolletz2010operational} is the continuous requirement of agents
over the planning horizon. With FX260 patterns and continuous requirements
over one week cyclic planning horizon, the number of possible shifts combinations per worker is approximately
$10^{19}$. The results of solving the two problems, P4 and P5, are summarized below.

\noindent \textbf{Instance P5}\\
The total demand for this problem is 1467 agent-hours. The stage~1 model
for this problem produced a solution with 227 shift schedules in 59 seconds with a lower bound\footnote{The best lower bound produced by the solver during the exucution.}
of 224 on the objective function. The best objective value remained at 227 even after
five minutes CPU running time. We aborted the solver and took
the solution with 227 shift schedules and solved stage~2 problem. This
produced an optimal solution in 78 seconds with 46 agents.
Applying Theorem~\ref{thm2} on the lower bound 224, the number of
agents must be at least 45. Therefore, solution obtained for this problem
with 46 agents is at least 97.7\% optimal.

\noindent\paragraph*{Instance P4} ~ \label{InstanceP4}\\
Recall the discussion about this problem under the introduction of split
technique. While solving stage~2 model for this problem, the solver aborted
the solution process reporting insufficient memory. We observed this
phenomenon whenever we tried to solve stage~2 problem with huge demand. The
reason is that high demand requires large number of shift schedules which in
turn results in large number of overlapping shift schedule pairs. As a
result, the number of constraints under (\ref{con:stage2restperiod})
increases dramatically (for this problem the number of constraints is 5054445
and the number of variables is 87041). Applying the split technique, this
problem was solved in less than 7 minutes
and the optimality gap is at most 6\%.

\noindent \textbf{Instances with cost objective and work load constraints}\\
Since there are shifts with short lengths, we solved stage~2 problem once
with the constraint on the number of shifts per week per worker (maximum 5
shifts) and once with the constraint on maximum number of worker-hours per
week per worker (maximum 50 hours). Again, with respect to objective
function, we have two options, number of workers and cost. Thus, we have four
combinations which are represented by instances P7 to P10 (see
Fig~\ref{fig:costsandtypes}).  For simplicity, cost is taken as a function of
shift duration alone. We took the costs as shown in
Fig~\ref{fig:costsandtypes}.
\begin{figure}[h]
  \centering
  \includegraphics[scale=.5]{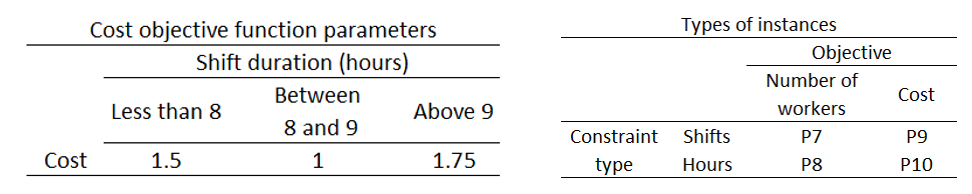}
  \caption{Parameters of instances P7 to P10}\label{fig:costsandtypes}
\end{figure}

\subsection{Call Center Data}
We have data on number of agents worked, hour-wise, for 36 weeks from a call
center. Like in check-in counters problem, the requirement of agents is round
the clock. We combined two weeks (14 days) data to form an instance. For
simplicity, we treat the hourly requirements as requirements for 30 minute
periods. Data are taken from two different streams with total demand varying
from large to very large. There are six instances, P16 to P21 in
Table~\ref{tab:results336}. The variation in the demand pattern is shown in
Figure~\ref{fig:ccdemands}. The total demands vary from 2155 to 5615
agent-hours. As demands are high, all the six instances had to be solved
using split technique. The results are summarized in
Table~\ref{tab:results336}. The solutions to these instances demonstrate the
efficacy of the split technique. Since P21 took a long time (10 minutes and
18 seconds), we solved this problem again (P22) with
$\mathbf{R}=3\mathbf{R}^1+\mathbf{R}^2$ and $\mathbf{R}^1=\lfloor
\frac{\mathbf{R}}{4} \rfloor$ (see Section~\ref{sec:splittechnique}). As a
result, the problem could be solved in less than 5 minutes.
\begin{figure}[h]
  \centering
  \includegraphics[width=11cm,height=6cm]{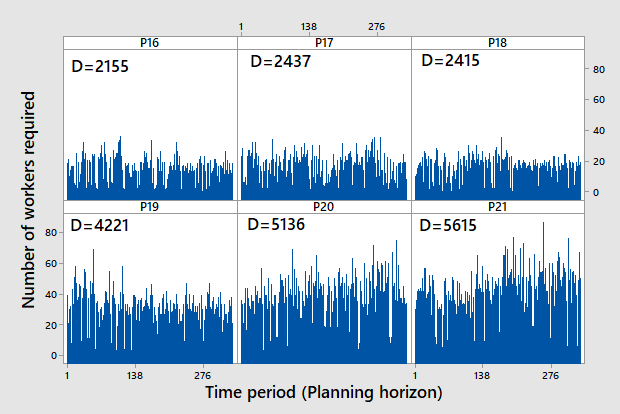}
  \caption{Agent requirements for call center problems}\label{fig:ccdemands}
\end{figure}

\subsection{Instances for ISTSP}
In this section we describe the data for instances on ISTSP which requires task
scheduling as well. We have one live data set from emergency medical services (108
service). For the other instances, P23 to P40 of Table~\ref{tab:results672}, data are
simulated following \cite{volland2017column}. All instances under this case are solved
assuming FX29 patterns.

\noindent \textbf{Medical emergency data (P41)}\\
We have historical data on the 108 service pertaining to a province in
Andhra Pradesh, India. The service brings patients needing emergency medical care to a hospital.
The most commonly reported emergencies (about 70\% of the cases) are related to
pregnancy, acute abdomen, trauma (vehicular), fevers (infections) and cardiac/cardio
vascular issues. Of these, pregnancy cases alone accounted for 23\%. Therefore,
we took data (number of patients arriving in every 15 minutes) on pregnancy cases for one
week (7 consecutive days) of a month. There were 588 such cases. We took the duration of
redressal of these cases (tasks)
to the nearest 15 minutes, and used the seriousness of the cases to set the start windows
and the precedence relations. We took the earliest start time of a task as the arrival TP of the patient, and set the
latest start time  based on the seriousness of the case. The resulting instance has the following characteristics:
$K=588,\ T=672,\ \omega=15$, maximum width of start window of task that is not involved
in precedence relationships is $4\omega$, total demand is 1151 worker-hours ($\sum_{j=1}^{672}R_j=4603$);
number of tasks involved in precedence relationships is 116 with a total of  113 precedence relationships.
The demand pattern is shown in Fig.~\ref{fig:pregnancy}.
\begin{figure}[h]
  \centering
  \includegraphics[scale=.5]{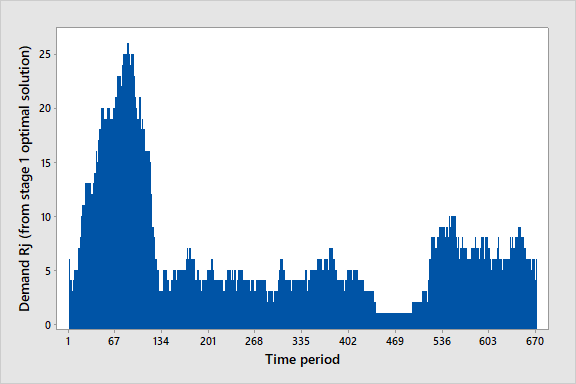}
  \caption{Staff requirements for medical emergency problem}\label{fig:pregnancy}
\end{figure}

The stage~1 model for this problem produced a near optimal solution with objective value of 118 shift schedules
in 119 seconds with a lower bound of 116. We terminated stage~1 at this time and
solved stage~2 problem which produced an optimal solution in 235 seconds with optimum objective value of
37 workers. Applying Theorem~\ref{thm2} on the lower bound 116, the minimum
number of workers is at least 24. In order to find a better lower bound, we took the stage~1 objective
function as $\max_j R_j$ and minimized it. The optimum objective value for this was 26. Therefore,
the number of workers for this problem cannot be less than 26. We then solved stage~1 problem once
again with the original objective function but this time by adding an additional constraint
$\max_j R_j \leq 26$. This resulted in the same objective value and lower bound as before (118/116) but the solution
was different. The resulting solution was used to solve stage~2 problem, and that produced a global
optimum objective value of 36 workers. Thus, the final solution was at least 62\% ($=100 - \frac{36-26}{26}\times 100$)
optimal.

\noindent \textbf{Simulated data}\\
We simulated data for ISTSP following the procedure described in Section~5.2.2 of
\cite{volland2017column}. Under this procedure, three types of tasks (day long, peak and
precedence) and three problem sizes (small (600 hours), medium (1000 hours) and large
(1400 hours)) are considered. For each size, three different distributions of task types
(S1, S2 and S3) were used. The parameters for simulation are summarized in
Table~\ref{fig:Vdist}. Thus, there are nine scenarios under this situation. We simulated
nine instances, P23 to P31 of Table~\ref{tab:results672}, following the procedure for
these nine parameter settings. The corresponding instances from \cite{volland2017column}
are listed in Table~\ref{tab:lmvsvf}. We ignored the additional instances considered by
\cite{volland2017column} (presented in Table~5 of their paper) because those instances
are more restricted (either shift patterns are limited to two or start window lengths are
reduced by 50\%). We simulated the nine instances using the same shift patterns used in
\cite{volland2017column}, namely FL29 shift patterns.
\begin{table}[h]
  \centering
    \caption{Parameters for simulation of instances for ISTSP}\label{fig:Vdist}
  \includegraphics[scale=.45]{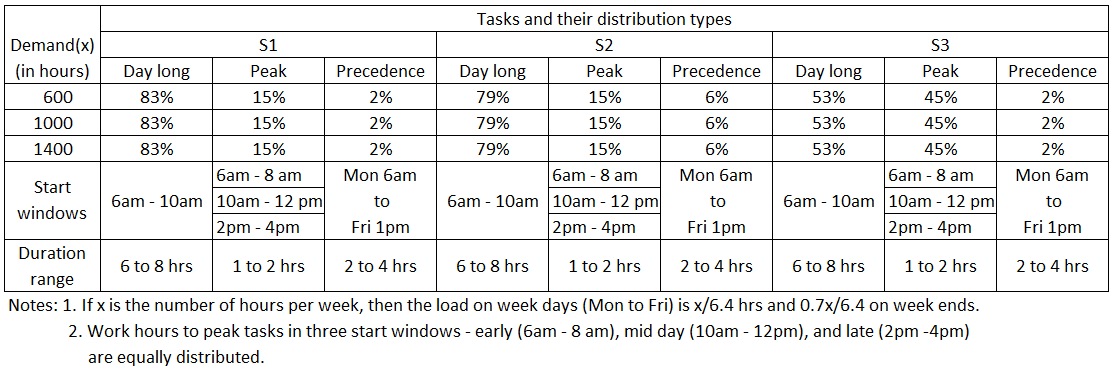}
\end{table}
Additionally, we created nine more instances by considering three more
distributions for the three types of tasks, say S4, S5 and S6. These
distributions are $(81,17,2)$, $(79,17,4)$ and $(72,16,12)$. These are the
resulting distributions if we apply the S1, S2, S3 distributions to number of
tasks instead of applying them to number of hours. These additional 9
instances are P32 to P40 in Table~\ref{tab:results672}. Besides the
differences in the distributions, one major difference between the two sets,
P23 to P31 and P32 to P40, is that in the latter the task start time windows
of all tasks have been chosen uniformly throughout the days. However, we have
not changed the characteristics of the start window widths and task
durations.

\section{Summary of Experimental Results} \label{sec:summarysection}
In this section we shall present the results of our numerical experiments. We have solved
39 problem instances and the results are summarised in Tables \ref{tab:results336} and
\ref{tab:results672}. Table~\ref{tab:results336} presents the results of problems with
fixed demand vector where task scheduling is not required. These problems are similar to
the ones considered in \cite{stolletz2010operational} and \cite{brunner2014stabilized}.
Table~\ref{tab:results672} presents the results for problems with task scheduling
requirements involving precedence relationships. These problems are similar to the ones
considered in \cite{volland2017column}. The parameters affecting the complexity of ISTSP
are: (i)~the length of planning horizon $T$, (ii)~number of tasks, $K$, (iii)~demand and
its pattern ($d_i$s, $r_{ik}$s), (iv)~number of precedence relationships and (v) number
of shift patterns. The range of these parameters in our instances are such that the
results can be compared with the results of the respective papers mentioned above.

To assess the merit of any solution, we consider four parameters: the total
demand, solution time, optimality metric and utilization metric. For problems
where task scheduling is involved, one should also look at the number of tasks
involved in the precedence relationships and the number of precedence
relations. Total demand, expressed as total number of worker-hours required,
is equal to $(\sum_jR_j)\omega/60$. For any solution with objective value
$O_s$ and lower bound $O_L$, the percentage optimality gap is at most
$\frac{O_S-O_L}{O_L}\times 100$. Therefore, we take $\mu = 100
-\frac{O_S-O_L}{O_L}\times 100$ as the measure of optimality. Utilization
metric is taken as 100 times the ratio of total demand to total supply.
Stage~1 model plays a crucial role in our solution approach. We shall first
discuss the results with respect to stage~1 problems.

\subsection{Results for stage 1 model}
In order to apply split technique for large demands in the case of ISTSP,
solving stage~l model efficiently is crucial (see Remark~\ref{rem:readyRvector}).
Fortunately, our experiments show that stage~1 model is solved very efficiently
despite the fact that it is more complex in the case of problems involving task scheduling
with precedence relationships compared to those for which the demand vector
is an input. Fig.\ref{fig:Stage1timeVsdemandAndOptimality} presents the
stage 1 model performance. All solutions are at least 96\% optimal (65\% are 100\% optimal),
and found in less than two minutes (with one exception which took 228 seconds). Average demand
is 2117 worker-hours. It should be noted that the high demand instances took smaller times
(see tables \ref{tab:results336} and \ref{tab:results672}).
\begin{figure}[h]
  \centering
  \includegraphics[scale=.65]{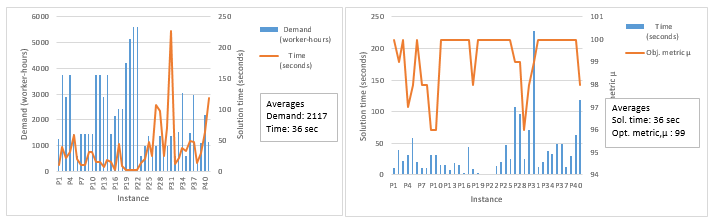}
  \caption{Performance of stage 1 model}\label{fig:Stage1timeVsdemandAndOptimality}
\end{figure}

\subsection{Results of problem instances with given demand vector}
Instances of P1 to P22 are under this category. For each of these instances,
$\omega=30,\ T=336$ and the demand varies from 608 to 5615 worker-hours.
For all instances with demand (number of worker-hours) less than 1500, we
could get solution directly. For the other instances, minimum demand is
above 2000. For these instances, the problems had to be solved using the spit
technique (see the discussion under Instance~P4 on
page~\pageref{InstanceP4}). The method used (`Direct' or `Split($\rho$)') is
specified in Table~\ref{tab:results336}. Instance P11 is solved twice with
$\rho=3,4$. In both cases, the solutions are near optimal (95\% and 98\%),
and the solution times are also close (75 and 81 seconds). Similarly, P21 was
solved twice with $\rho=3,\ 4$. Split(3) took 618 seconds and split(4) took 242 seconds.
In both cases, the solutions are at least 99\% optimal. The necessity for
splitting is arising from large demand. To highlight this, the number of
variables and constraints of stage~2 model with the original demand vector
are presented in Table~\ref{tab:results336}. From the table, it can be seen
that for the instances solved with split technique, the number of constraints
ranges from 1.7 millions to 16.8 millions.
\afterpage{%
\begin{table}[h]
  \centering
 \caption{Results for staff scheduling problem instances}\label{tab:results336}
  \includegraphics[width=12cm, height=14cm]{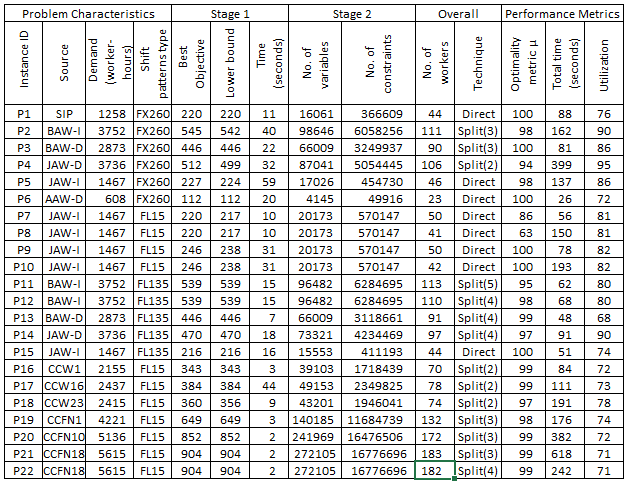}\\
  \begin{tabular}{m{12cm}}
    {\small Note: P1 is SIP, P2 to P15 are check-in counter problems and
           P16 to P21 are call center problems. For all problems, P1 to P21,
           with the exception of P8 and P10,
           the worker load constraint is on the number of shifts, that is, each
           worker is assigned a maximum of 5 shifts; for P8 and P10, it is on
           the number of hours, a maximum of 50 hours per week. Similarly, for all patterns
           other than P9 and P10, the objective function is number of workers, and
           for P9 and P10, it is the cost. P11 and
           P12 are same instance but solved differently. Likewise, P20 and
           P21 are same instance but solved differently. The columns under
           Stage~2 present the size of the problem for the stage~2 problem
           with the original demand vector.
    }
  \end{tabular}
\end{table}
\clearpage
}
The performance metrics of two stage method (with split technique where
needed) as applied to instances of P1 to P22 are presented in the last
three columns of Table~\ref{tab:results336} and in Fig.\ref{fig:TSMBSPerformance}.
In all but two of the instances, the optimality was at least 94\%. In one case, P7,
it is 86\% and in the other case, P8, it is 63\%. The optimality metric $\mu$ for P8
is computed using a poor lower bound, namely total demand by the maximum number of
hours that a worker can be assigned (recall that P8 constraints are based on maximum
number of hours and not the number of shifts, see Fig.\ref{fig:costsandtypes}).
The average solution time is 2 minutes 40 seconds and the average utilization is 79\%.
\begin{figure}[h]
  \centering
  \includegraphics[width=11cm]{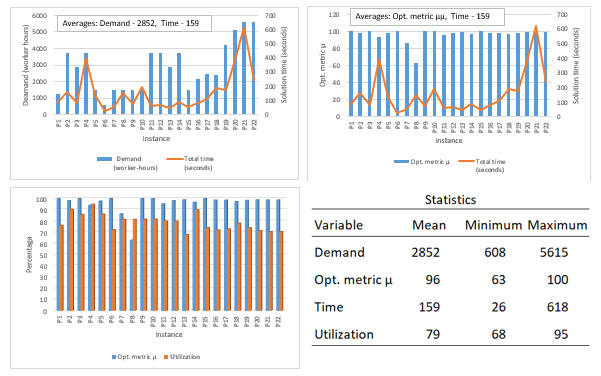}
  \caption{Performance metrics of two stage method for P1 to P17}\label{fig:TSMBSPerformance}
\end{figure}

\cite{stolletz2010operational} reports the solution times for three different cases.
Though our case (continuous demand) is more complex, a comparison is presented
in Fig.\ref{fig:stoltsm} with respect to  solution times.
\begin{figure}[h]    
  \centering
  \includegraphics[scale=.5]{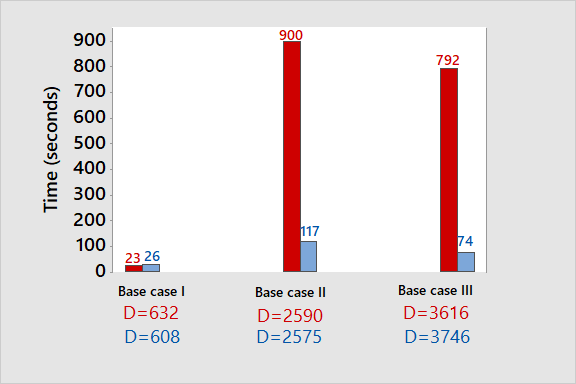}
  \caption{A comparison of solution times. The demands (D) in red are the figures taken from \cite{stolletz2010operational} and the demands in blue are simulated figures for this article.}\label{fig:stoltsm}
\end{figure}

\subsection{Results of ISTSP problem instances}
Instances P23 to P41 are under this category. Task scheduling is
a part of the problem. Results are presented in Table~\ref{tab:results672}.
For these problems, $\omega=15$ and $T=672$, number of tasks $K$ varies from
100 to 588, and the demand varies from 450 to  3032 with an average of
1266 worker-hours. Instances P23 to P40 are simulated, and P41 is based on
a live problem. All simulated instances with the exception of P30 have been
solved to optimality by the two stage method (without the need for
split technique). The solution to P30 is at least
95\% optimal. Utilization in the solutions varied from 75\% to 98\% with an
average of 85\%. Solution times varied from 13 to 427 seconds with an average
of 111 seconds. The solution for the instance with live data (P41) is
at least 58\% optimal but the utilization is 98\%. The demand for this
problem is 1151 worker-hours and it took 354 seconds to solve.
Fig.\ref{fig:VFOverallPerformance} presents the performance of
two stage method.
\begin{table}   
  \centering
 \caption{Results for ISTSP instances}\label{tab:results672}
  \includegraphics[width=12cm, height=12cm]{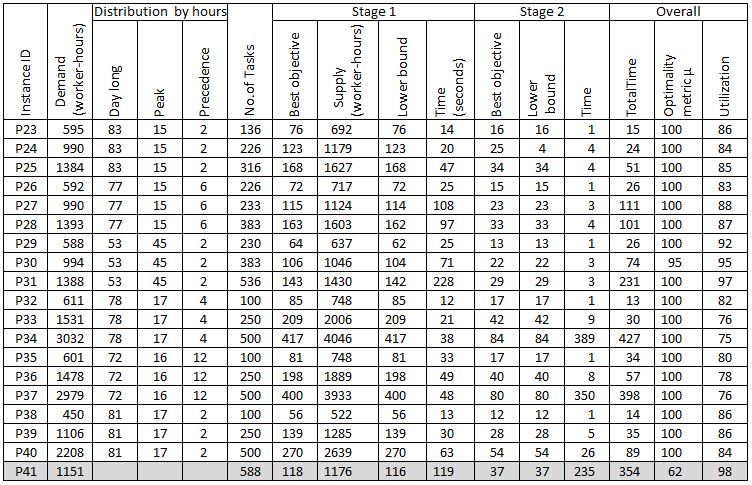}
 \end{table}
\begin{figure}[h]   
  \centering
  \includegraphics[width=11cm]{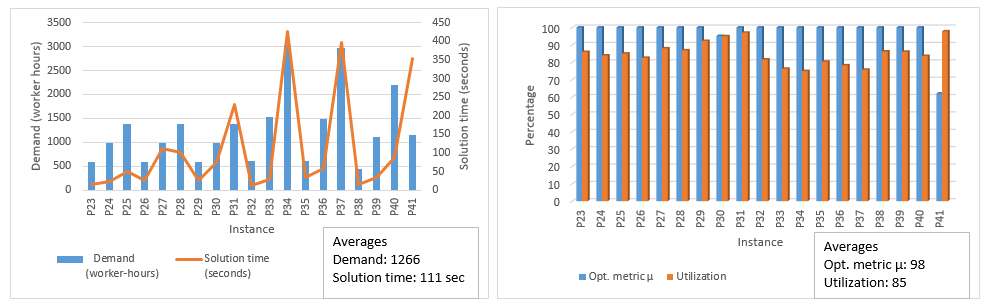}
  \caption{Performance metrics of two stage method for P23 to P41}\label{fig:VFOverallPerformance}
\end{figure}
We shall compare the performance of the two stage method with that of
\cite{volland2017column}. For this, we use the results of instances P23 to P31. Since we
do not have the data used in \cite{volland2017column}, we use the simulation approach.
Recall that instances P23 to P31 are simulated following the procedure stated in
\cite{volland2017column}. It must be pointed out that the comparison is not based
on exact instances but on similar instances. Table~\ref{tab:lmvsvf} presents the
one-to-one correspondence between the two sets of problem instances along with respective
solution times. The solution times for \cite{volland2017column} are taken from their
article. Both methods produced optimal solutions for all the nine instances. The last
column of the table presents the reduction percentages in the solution times. The
solution times are also shown in Fig.\ref{Fig:TsmVsVfTimewise}
\begin{table}[h]   
  \centering
 {\small
    \caption{Comparison with VF w.r. to time wise performance}\label{tab:lmvsvf}
  \begin{tabular}{cccccccc} \hline
         &      & \multicolumn{2}{c}{Correspondence} &  & \multicolumn{2}{c}{Time (seconds)} & Reduction \\ \cline{3-4} \cline{6-7}
Size	 &	Demand	 &	VF	 &	TSM	 &  &	$t_{VF}$	 &	$t_{TSM}$	 &	Percent	 \\ \hline
	 &	595	 &	SMA-S1-LW-FL	 &	P23	 &  &	240	 &	15	 &	93	 \\
Small	 &	592	 &	SMA-S2-LW-FL	 &	P26	 &  &	360	 &	24	 &	93	 \\
	 &	588	 &	SMA-S3-LW-FL	 &	P29	 &  &	900	 &	51	 &	94	 \\
	 &	990	 &	MED-S1-LW-FL	 &	P24	&  &	10800	 &	26	 &	99	 \\
Medium	 &	990	 &	MED-S2-LW-FL	 &	P27	 & &	600	 &	111	 &	81	 \\
	 &	994	 &	MED-S3-LW-FL	 &	P30	 &	& 3180	 &	101	 &	96	 \\
	 &	1384	 &	LAR-S1-LW-FL	 &	P19	 & &	10800	 &	26	 &	99	 \\
Large	 &	1393	 &	LAR-S2-LW-FL	 &	P28	 & &	300	 &	74	 &	75	 \\
	 &	1388	 &	LAR-S3-LW-FL	 &	P31	 &	 & 2760	 &	231	 &	99	 \\  \hline
\multicolumn{8}{m{11.5cm}}{\scriptsize Note: $t_{total}$ is the total time extracted Table~5
            of \cite{volland2017column}; $t_{TSM}$ is the total solution time by two stage method
            (TSM) taken from Table~4. Comparison is made based on similar but not the same instances.}
   \end{tabular}
   }
\end{table}
   \begin{figure}
  \centering
  \includegraphics[scale=.75]{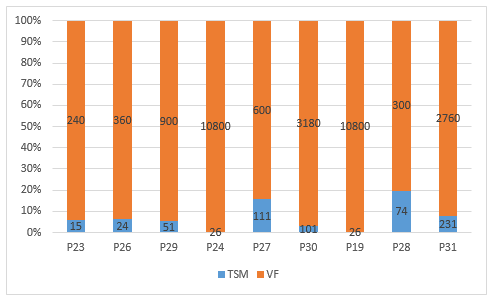}    
  \caption{Comparison of solution times of two stage method (TSM) with
      \cite{volland2017column} (VF) method.}\label{Fig:TsmVsVfTimewise}
\end{figure}

\section{Conclusion} \label{sec:conclusion}
In this article, we considered the integrated staff and task scheduling
problem. The problem is hard to solve even for a predetermined task
schedule. Several authors have considered the problem and proposed column
generation methods to solve. In this article, we proposed a two stage
approach to the problem and introduced the split technique to handle problems
with large demands. We have demonstrated the efficacy of the two stage method
with split technique through a number of numerical experiments in
reducing solution times dramatically. In the existing literature, solution methods
are assessed at different demand sizes such as small, medium and large. Through the split
technique introduced in this article, we are able to handle problems with
large demands efficiently. This raises a question that whether demand size
has any influence on the complexity of the problem. This
point needs to be explored theoretically. Another direction for future research is
extending the methods introduced in this article to multi-skill personnel
staff scheduling problems.

\bibliography{refs}
\bibliographystyle{ecta}

\end{document}